\documentclass[12pt,draft,reqno]{amsart}
\makeatletter
\providecommand*{\shuffle}{%
  \mathbin{\mathpalette\shuffle@{}}%
}
\newcommand*{\shuffle@}[2]{%
  \sbox0{$#1\vcenter{}$}%
  \kern .15\ht0 
  \rlap{\vrule height .25\ht0 depth 0pt width 2.5\ht0}%
  \raise.1\ht0\hbox to 2.5\ht0{%
    \vrule height 1.75\ht0 depth -.1\ht0 width .17\ht0 %
    \hfill
    \vrule height 1.75\ht0 depth -.1\ht0 width .17\ht0 %
    \hfill
    \vrule height 1.75\ht0 depth -.1\ht0 width .17\ht0 %
  }%
  \kern .15\ht0 
}
\makeatother

\usepackage{hyperref}
\usepackage{amsmath}
\usepackage{amssymb}
\usepackage{amsfonts}
\usepackage{amsthm}
\usepackage{ascmac}
\usepackage{bm}
\usepackage{url}
\numberwithin{equation}{section}
\newtheorem{theorem}{Theorem}[section]
\newtheorem{lemma}[theorem]{Lemma}

\newtheorem{example}[theorem]{Example}

\theoremstyle{plain}
\newtheorem{theo}{Theorem}

\newtheorem{prop}[theo]{Proposition}

\theoremstyle{definition}

\newcommand{\RR}{\mathfrak{R}}
\newcommand{\F}{\mathcal{F}}
\newcommand{\N}{\mathbb{N}}
\newcommand{\Z}{\mathbb{Z}}
\newcommand{\Q}{\mathbb{Q}}

\newcommand{\C}{\mathbb{C}}

\allowdisplaybreaks
\address[Daiki Iju]{Department of Mathematical Science, Graduate School of Science and Technology, Tokyo University of Science 2641 Yamazaki, Noda-shi, Chiba Prefecture 278-8510, Japan}
\email{6126701@ed.tus.ac.jp}
\address[Takashi Nakamura]{Department of Mathematical Science, Graduate School of Science and Technology, Tokyo University of Science 2641 Yamazaki, Noda-shi, Chiba Prefecture 278-8510, Japan}
\email{nakamuratakashi@rs.tus.ac.jp}
\subjclass[2020]{Primary 11M32; Secondary 11B85, 68R15}
\keywords{evil number, odious number, Multiple zeta values, Multiple $L$-values, negative weight,  Thue-Morse sequence}

\begin{document}
\title[Double $L$-values of Negative Weight Admitting Series Representations]{Double \textit{L}-values of Negative Weight Admitting Series Representations}
\author[D.~IJU and T.~NAKAMURA]{Daiki Iju and Takashi NAKAMURA }
\maketitle

\begin{abstract}
In this paper, we construct double $L$-values of negative weight that admit
series representations.
In our construction, we use the Thue–Morse sequence, which appears in combinatorics on words and automatic sequences.
\end{abstract}


\section{ Introduction and Main result}
In 2004,  Arakawa and Kaneko \cite[Section 1]{AK} introduced the following two types of multiple $L$-values (see also \cite[Chapter 13]{ZhaoPL}).
Fix $m\in\N$ and  $\mathfrak{R} = \mathfrak{R}_m$ denote the $\Z$-module $\mathbb{Z}/m\mathbb{Z}$. 
Let $\mathcal{F}(\mathfrak{R};\mathbb{C})$ be the $\C$-vector space consisting of all mappings $f:\RR\to\C$. 
An element $f\in \F(\mathfrak{R};\C)$ is viewed naturally as a function on $\Z$ via the projection $\Z\to\RR$.
Fix a primitive $m$-th root of unity $\xi = \xi_m:=\exp(2\pi i/m)$. Then, for each  $a\in \mathfrak{R}$, let  $\varphi_a \in \mathcal{F}(\mathfrak{R};\mathbb{C})$, we defined by
\[
\varphi_a(x)=\xi^{ax}\quad (x\in \mathfrak{R}).
\]
Note that the set of functions $\{ \varphi_a \}_{a\in \mathfrak{R}}$ constitutes a basis of the space $\F(\mathfrak{R};\C)$. Moreover, each $f\in\F(\RR;\C)$ admits a finite Fourier expansion 
\begin{equation}\label{f1}
f(x)=\sum_{a\in\RR}\hat{f}(a)\xi^{ax}, \quad \hat{f}(a)=\frac{1}{m}\sum_{y\in\RR}f(y)\xi^{-ax}.
\end{equation}
For $f_1,\ldots,f_n \in \mathcal{F}(\mathfrak{R};\mathbb{C})$, $k_1,\ldots,k_n\in\N$ and $k_1\ge 2$, we define the following multiple $L$-values
$L_{\shuffle}$ and $L_*$ by
\[
\begin{aligned}
L_{\shuffle}(k_1,\ldots,k_n;&f_1,\ldots,f_n) \\
&:=\sum_{\substack{m_1>\cdots>m_n>0}}\frac{f_1(m_1-m_2)\cdots f_{n-1}(m_{n-1}-m_n)f_n(m_n)}{m_1^{k_1}m_2^{k_2}\cdots m_n^{k_n}}
\end{aligned}
\]
and
\[
L_\ast(k_1,\ldots,k_n;f_1,\ldots,f_n):=\sum_{\substack{m_1>\cdots>m_n>0}}\frac{f_1(m_1)f_2(m_2)\cdots f_n(m_n)}{m_1^{k_1}m_2^{k_2}\cdots m_n^{k_n}}.
\]
We call $k=k_1+\cdots+k_n$ the weight and $n$ the depth. If $k_1\ge 2$, these series converge absolutely. 
When $k_1 = 1$, the series are presented to be the limits
\[
\begin{aligned}
L_{\shuffle}(&1,k_2,\ldots,k_n;f_1,\ldots,f_n) \\
&:=\lim_{R\to \infty}\sum_{\substack{R\ge m_1>\cdots>m_n>0}}\frac{f_1(m_1-m_2)\cdots f_{n-1}(m_{n-1}-m_n)f_n(m_n)}{m_1m_2^{k_2}\cdots m_n^{k_n}},
\end{aligned}
\]
\[
\begin{aligned}
L_\ast(&1,k_2,\ldots,k_n;f_1,\ldots,f_n) \\
&:=\lim_{R\to \infty}\sum_{\substack{R\ge m_1>\cdots>m_n>0}}\frac{f_1(m_1)f_2(m_2)\cdots f_n(m_n)}{m_1m_2^{k_2}\cdots m_n^{k_n}}.
\end{aligned}
\]
As for the convergence, there is the following criterion in \cite[Proposition 1.1]{AK}.
\begin{prop}[{\cite[Proposition 1.1]{AK}}]\label{prop1}
Suppose $k_1=1$. Then, the series $L_{\shuffle}$ and $L_\ast$ are convergent if and only if $\sum_{y\in \mathfrak{R}}f_1(y)=0$.
\end{prop}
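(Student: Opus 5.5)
The plan is to handle the outer variable $m_1$ separately, since only it carries exponent $1$. Everything else will be summable, and the question reduces to the partial sums of $f_1$ against $1/m_1$.

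\emph{Case $n=1$.} Here both series are $\sum_{m_1} f_1(m_1)/m_1$.
\begin{itemize}
\item If $\sum_{y\in\RR} f_1(y)=0$, the partial sums $S(N)=\sum_{m\le N} f_1(m)$ are bounded by $m\max|f_1|$, because $f_1$ is $m$-periodic. Abel summation against the decreasing sequence $1/m_1\to0$ then gives convergence.
\item Conversely, if the mean $c=\frac1m\sum_y f_1(y)$ is nonzero, write $f_1=c+g$ with $\sum_y g(y)=0$. The $g$-part converges, while $c\sum 1/m_1$ diverges like $c\log R$.
\end{itemize}

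\emph{Case $L_\ast$, $n\ge2$.} Fix $m_1$ and set
\[
A(m_1)=\sum_{m_1>m_2>\cdots>m_n>0}\frac{f_2(m_2)\cdots f_n(m_n)}{m_2^{k_2}\cdots m_n^{k_n}},
\]
so the truncated series is $\sum_{m_1\le R} f_1(m_1)A(m_1)/m_1$. Two properties of $A$ are needed.
\begin{itemize}
\item $A(m_1)$ converges to a finite limit $A_\infty$. This holds when $k_2\ge2$, by absolute convergence.
\item $A(m_1+1)-A(m_1)=O(\log^{n-2}(m_1)/m_1^{k_2})$.
\end{itemize}
Note that $k_2=1$ can occur, since the statement only requires $k_1=1$. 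Then $A(m_1)$ may grow like a power of $\log m_1$. Its increments are still $O((\log m_1)^{n-2}/m_1)$, and $A$ is of bounded-variation type relative to $1/m_1$.

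With these in hand, apply Abel summation with $S(N)=\sum_{m\le N}f_1(m)$:
\[
\sum_{m_1\le R}\frac{f_1(m_1)A(m_1)}{m_1}=\frac{S(R)A(R)}{R}-\sum_{N<R}S(N)\Bigl(\frac{A(N+1)}{N+1}-\frac{A(N)}{N}\Bigr).
\]
If the sum of $f_1$ over $\RR$ vanishes, $S$ is bounded. The bracket is
\[
O\bigl((\log N)^{n-1}/N^2\bigr)+O\bigl((\log N)^{n-2}/N^{1+k_2}\bigr),
\]
which is summable, and the boundary term tends to $0$. Hence the series converges.

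For the converse, split $f_1=c+g$ again. The $g$-part converges by the above. The $c$-part is $c\sum_{m_1\le R}A(m_1)/m_1$, whose terms are eventually comparable to $A_\infty/m_1$ or larger. To make this rigorous without sign issues, I would use the asymptotic $A(m_1)=P(\log m_1)+o(1)$ for a polynomial $P$. If $P$ is nonzero, $\sum_{m_1\le R} A(m_1)/m_1$ grows like an antiderivative of $P$ evaluated at $\log R$, which diverges.

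\textbf{Main obstacle.} The real obstacle is the degenerate case where $P\equiv 0$, i.e.\ the inner series sums to zero, as can happen with characters. In that case the "only if" direction as literally stated seems delicate: $\sum_{m_1} c\,A(m_1)/m_1$ could converge. I expect to follow Arakawa--Kaneko and interpret the criterion as being required for all choices of $f_2,\dots,f_n$, or generically. The clean argument is then to take $n=1$, or to take $f_2,\ldots,f_n$ with $A_\infty\ne0$ (e.g.\ constant functions $1$). This is the step I would check most carefully against \cite{AK}.

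\emph{Case $L_{\shuffle}$.} Substitute $d=m_1-m_2>0$, so $m_1=d+m_2$. The outer sum becomes
\[
\sum_{d}\frac{f_1(d)}{d+m_2}
\]
with the remaining variables fixed. The same Abel-summation argument in $d$ applies: the weight $1/(d+m_2)$ is decreasing in $d$, and the dependence on $m_2$ is handled by the decay $m_2^{-k_2}$ together with the bound $\sum_{d\le R}f_1(d)/(d+m_2)=O(1/m_2)$ when $\sum_y f_1(y)=0$. This gives absolute convergence of the remaining sum over $m_2,\dots,m_n$. The truncation $m_1\le R$ in place of $d\le R$ contributes a tail that is controlled by the same estimate. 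The converse is again obtained by isolating the mean of $f_1$, which produces a term like $c\log R\cdot L(\cdots)$.
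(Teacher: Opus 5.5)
Your ``if'' direction is sound: bounded partial sums of $f_1$, Abel summation with your increment bound on $A$, and the substitution $d=m_1-m_2$ for $L_{\shuffle}$ all work. The gap is in the ``only if'' direction, and it cannot be closed, because as literally stated the proposition is false. The paper does not prove it. It quotes it from \cite{AK} only to note that \cite{AIN} refuted it (Theorem~\ref{th} with $j=1$).

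The degenerate case you flagged is exactly the counterexample. Take $\RR=\RR_2$, $n=2$, $k_2=2$, $f_1\equiv 1$, and $f_2(x)=3$ for even $x$, $f_2(x)=-1$ for odd $x$. Then $\sum_{y}f_1(y)=2\neq 0$, but
\[
\sum_{x\ge 1}\frac{f_2(x)}{x^2}=3\cdot\frac{\zeta(2)}{4}-\frac{3\zeta(2)}{4}=0 .
\]
Hence $A(m_1)=-\sum_{x\ge m_1}f_2(x)/x^2=O(1/m_1)$, the terms $f_1(m_1)A(m_1)/m_1$ are $O(m_1^{-2})$, and $L_\ast(1,2;f_1,f_2)$ converges. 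Since $f_1$ is constant, $L_{\shuffle}(1,2;f_1,f_2)$ is the same series. So your $L_{\shuffle}$ converse, built on the term $c\log R\cdot L(\cdots)$, fails in the same way whenever that inner value vanishes.

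Your proposed fix, to read the criterion as holding ``for all choices of $f_2,\dots,f_n$, or generically'', or to pick $f_2,\dots,f_n$ with $A_\infty\neq 0$, does not prove the statement. It replaces it with a different one. The proposition is a claim about each given tuple $(f_1,\dots,f_n)$, so you are not free to choose convenient $f_2,\dots,f_n$.

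What your argument legitimately establishes is:
\begin{itemize}
\item the ``if'' direction in general;
\item the ``only if'' direction under an extra nondegeneracy hypothesis, for instance $n=1$, $A_\infty\neq 0$, or $P\not\equiv 0$ in your notation.
\end{itemize}
At your ``main obstacle'' the correct conclusion was to exhibit the counterexample, not to reinterpret the statement.

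The paper pushes this same mechanism further. It chooses $f_2$ so that $\sum_{x\ge 2^{l+3}N}f_2(x)/x^2=O(N^{-(l+2)})$, using Prouhet--Tarry--Escott cancellation. That decay is strong enough that even the negative-weight series $L_\ast(-l,2;f_1,f_2)$ converges.
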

However, Ageji, Iju, and Nakamura \cite{AIN} provided a counter  example  to this claim. More precisely, they showed the following result. 

\begin{theo}[{\cite[Theorem 1.1]{AIN}}]\label{th}
Let $j\le 1$ be an integer and, $f_1 = 1$ which implies $\sum_{y\in \mathfrak{R}_2}f_1(y)\neq 0$. 
Then, there exist $k\in \mathbb{N}_{\ge 2}$ and $f_2 \in \mathcal{F}(\mathfrak{R}_2;\mathbb{Q})$ 
such that $j+k\ge3$ and the series $L_\ast(j,k;f_1,f_2)$ converges.
\end{theo}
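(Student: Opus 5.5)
The plan is to make the inner sum over $m_2$ converge to $0$ fast enough to cancel the growth coming from $m_1^{-j}$. Write $j=-p$ with $p\ge -1$, so that $p=-1$ corresponds to $j=1$. Fix an integer $k\ge 3-j$; this gives $j+k\ge 3$, and also $k\ge 2$ because $j\le 1$. Define $f_2\in\F(\RR_2;\Q)$ by
\[
f_2(n)=\bigl(1-2^{1-k}\bigr)+(-1)^n ,
\]
that is, $f_2(\text{odd})=-2^{1-k}$ and $f_2(\text{even})=2-2^{1-k}$. The point of this choice is the identity
\[
\sum_{n\ge1}\frac{f_2(n)}{n^k}=(1-2^{1-k})\zeta(k)-(1-2^{1-k})\zeta(k)=0,
\]
which uses $\sum_{n\ge1}(-1)^n n^{-k}=-(1-2^{1-k})\zeta(k)$.

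Next, interchange the finite sums in the truncated series:
\[
\sum_{R\ge m_1>m_2>0}\frac{f_2(m_2)}{m_1^{j}m_2^{k}}=\sum_{m_2<R}\frac{f_2(m_2)}{m_2^k}\bigl(A(R)-A(m_2)\bigr),
\qquad A(x):=\sum_{m=1}^{x}m^{-j}.
\]
This splits into
\[
A(R)\,S(R)-\sum_{m_2<R}\frac{f_2(m_2)A(m_2)}{m_2^k},
\qquad S(R):=\sum_{m_2<R}f_2(m_2)m_2^{-k}.
\]
The function $A$ is explicit. For $j\le 0$ it is a Faulhaber polynomial of degree $1-j$, and for $j=1$ it is the harmonic number $H_x$. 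In either case $A(x)=O(x^{1-j})$ for $j\le0$ and $A(x)=O(\log x)$ for $j=1$.

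The second sum converges absolutely. Its terms are $O(n^{1-j-k})$, or $O(n^{-k}\log n)$ when $j=1$, and $k+j-1\ge 2$.

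The first term is the main obstacle. Since the full series for $S$ sums to $0$, we have $S(R)=-\sum_{n\ge R}f_2(n)n^{-k}$. Estimate this tail by splitting $f_2$. The constant part contributes $O(R^{1-k})$ by comparison with an integral. The alternating part contributes $O(R^{-k})$ by the alternating series bound. Hence $S(R)=O(R^{1-k})$. It follows that $A(R)S(R)=O(R^{2-j-k})\to 0$ because $k\ge 3-j$, and for $j=1$ we get $O(R^{1-k}\log R)\to0$.

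Therefore the limit defining $L_\ast(j,k;1,f_2)$ exists and equals $-\sum_{n\ge1}f_2(n)A(n)n^{-k}$. This gives the required $k$ and $f_2$ with $j+k\ge3$, while $\sum_{y\in\RR_2}f_1(y)=2\neq0$, which contradicts the criterion of Proposition~\ref{prop1}. I expect the delicate point to be checking that the vanishing of the total $\sum f_2(n)n^{-k}$ is exactly what removes the divergent $A(R)\cdot\mathrm{const}$ term. That vanishing is what forces the particular rational value $1-2^{1-k}$ as the constant part of $f_2$.
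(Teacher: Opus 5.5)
Your proof is correct. With $k\ge 3-j$, the choice $f_2(n)=(1-2^{1-k})+(-1)^n$ makes $\sum_{n\ge1}f_2(n)n^{-k}=0$, so the inner partial sums are minus tails of size $O(R^{1-k})$. That beats the growth $A(R)=O(R^{1-j})$, or $O(\log R)$ when $j=1$.

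The paper does not reprove this cited result, but your argument is exactly the mechanism of its proof of Theorem~\ref{th3}: the vanishing of $L_\ast(k;f_2)$ turns each partial sum into minus a tail, and the tail bound is then summed against $m_1^{-j}$. Your interchange of sums is just equivalent bookkeeping for that last step. The Thue--Morse/Prouhet cancellation is needed in the paper only because $k=2$ is fixed there while $j=-l$ is arbitrary. Here $k$ may be taken large, so the trivial tail bound suffices.
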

Motivated by this result, we obtain the following theorem.
This result shows that one can construct convergent multiple $L$-values even in the case of negative weight.
\begin{theorem}\label{th1}
For any $l\in\N_{\ge0}$, there exist $f_1,f_2\in\F(\RR_{2^{l+3}};\Q(\xi_{2^{l+2}}))$ such that the series $L_\ast(-l,2;f_1,f_2)$ converges.
\end{theorem}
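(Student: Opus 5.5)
The plan is to write the double sum as an iterated sum over $m_1$ and control the inner tail by an asymptotic expansion. For $R\in\N$ put
\[
S(R)=\sum_{R\ge m_1>m_2>0}\frac{f_1(m_1)f_2(m_2)}{m_1^{-l}m_2^{2}}=\sum_{n\le R} f_1(n)\,n^{l}\,F(n),\qquad F(n):=\sum_{0<m<n}\frac{f_2(m)}{m^2}.
\]
Since $F(n)=C-T(n)$ with $C=\sum_{m\ge1}f_2(m)m^{-2}$ and $T(n)=\sum_{m\ge n}f_2(m)m^{-2}$, convergence of $S(R)$ will require three things:
\begin{enumerate}
\item $C=0$;
\item $T(n)$ decays fast enough on the support of $f_1$;
\item the surviving slowly decaying part is killed by cancellation inside each period of $f_1$.
\end{enumerate}
In the spirit of Theorem~\ref{th}, one cannot invoke Proposition~\ref{prop1}. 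Instead the cancellation has to come from the arithmetic of $f_1,f_2$.

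\textbf{Step 1 (killing $C$).} I would take $f_2$ built from the period-$2^{l+3}$ structure so that $\sum_m f_2(m)m^{-2}=0$. The basic mechanism is already visible at level $2$: the values $f_2(\text{odd})=1$, $f_2(\text{even})=-3$ give $\tfrac34\zeta(2)-\tfrac34\zeta(2)=0$. More generally, the $2$-adic scaling $\sum_{m\equiv0\,(2^j)} m^{-2}=4^{-j}\zeta(2)$ lets one impose $C=0$ by rational linear conditions on the values of $f_2$ on the classes $v_2(m)=j$. Twisting these classes by powers of $\xi_{2^{l+2}}$ gives additional free parameters in $\Q(\xi_{2^{l+2}})$ while keeping $C=0$.

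\textbf{Step 2 (expanding $T$).} For $f_2$ periodic mod $N=2^{l+3}$, I would write $m=n+qN+r$ and expand $(n+qN+r)^{-2}$ by Euler--Maclaurin/Taylor in $1/n$. This gives
\[
T(n)=\sum_{j=1}^{l+1}\frac{c_j(n \bmod N)}{n^{j}}+O(n^{-l-2}),
\]
where each $c_j(r)$ is an explicit linear combination of the moments $\sum_{s\in\RR}f_2(s)s^i$ with Bernoulli-type coefficients. Since $f_1(n)n^lF(n)=-f_1(n)n^lT(n)$, the error term contributes an absolutely convergent series after multiplying by $n^l$. The terms with $j\le l$ give contributions of size $n^{l-j}\ge 1$, which are far too large. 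The term $j=l+1$ gives a contribution of size $1/n$, which is only conditionally harmless.

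\textbf{Step 3 (Thue--Morse cancellation).} Here the Thue--Morse sequence $t(n)$ enters through its Prouhet--Tarry--Escott property: for $0\le i\le l$,
\[
\sum_{s=0}^{2^{l+1}-1}(-1)^{t(s)}s^i=0.
\]
I would choose $f_2(s)$ to be a combination of the Step~1 pattern and $(-1)^{t(s)}$, with root-of-unity twists as needed. The aim is that the moments entering $c_1,\dots,c_l$ vanish, so that $c_j\equiv0$ for $j\le l$ at least on residue classes in the support of $f_1$. I would then choose $f_1$, again Thue--Morse twisted by powers of $\xi_{2^{l+2}}$, supported on those classes and satisfying
\[
\sum_{r\in\RR}f_1(r)c_{l+1}(r)=0.
\]
With these choices $\sum_{n\le R}f_1(n)c_{l+1}(n)/n$ converges by partial summation. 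Together with the absolutely convergent error term from Step~2, this shows that $S(R)$ converges.

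\textbf{Main obstacle.} The hard part will be Step~3 combined with Step~1: making $C=0$ and all of $c_1,\dots,c_l$ vanish simultaneously with one finite choice of $f_2$ on $\RR_{2^{l+3}}$. The difficulty is that $c_j(r)$ depends on $r$ through shifted moments $\sum_s f_2(s)(s-r)^i$, and these must vanish for the relevant $r$. I would first try the ansatz $f_2(s)=(-1)^{t(s)}g(s)$ with $g$ depending only on $s \bmod 4$ and on $v_2(s)$. I would then check that the shift-invariance of the PTE identity under translation by multiples of $2^{l+1}$ (the block structure) handles the $r$-dependence. This is where the modulus $2^{l+3}$, one extra factor of $4$ beyond the PTE block, should be needed.
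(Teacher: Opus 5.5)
Your outline has the right ingredients: killing $C=\sum_m f_2(m)m^{-2}$ by $2$-adic scaling, expanding the tail $T(n)$ on the support of $f_1$, and using Prouhet--Tarry--Escott cancellation of Thue--Morse signs. But it stops where the proof has to begin. The theorem is an existence statement, so its whole content is a concrete pair $f_1,f_2$ for which three things hold at once: $C=0$, $c_1,\dots,c_l$ vanish on the support of $f_1$, and $c_{l+1}$ is controlled. You list this as the ``main obstacle'' and only name an ansatz you would try, so the proposal is a plan rather than a proof.

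That ansatz also conflicts with your Step 1. Once the Thue--Morse sign is made $2^{l+3}$-periodic, $t(2m \bmod 2^{l+3})=t(m \bmod 2^{l+2})$. So multiplying pointwise by $(-1)^{t(s)}$ destroys the exact scaling $\sum_{v_2(m)=j}m^{-2}\propto 4^{-j}\zeta(2)$. The class sums become different combinations of values $L(2,\chi)$, and nothing shows $C$ can still vanish together with the moment conditions.

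The missing idea is to compose with the dilation instead of multiplying. For any periodic $\phi$ put $f_2(m)=4\phi(m/2)\mathbf{1}_{2\mid m}-\phi(m)$. Then $\sum_m f_2(m)m^{-2}=0$ for every $\phi$. For even $n$ the tail is $T_{f_2}(n)=T_\phi(n/2)-T_\phi(n)$, so the two requirements decouple.

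The paper's $f_2$ has exactly this form, with $\phi(j+2^{l+1}q)=\tfrac12(-1)^{t_j+q}$ for $0\le j<2^l$ and $\phi=0$ on the other residues. Equivalently, $\phi=\sum_j(-1)^{t_j}h_j$ and $4\phi(m/2)\mathbf{1}_{2\mid m}=\sum_j(-1)^{t_j}(g_{j1}+g_{j2})$. The paper also takes simply $f_1=\mathbf{1}_{2^{l+3}\mid m}$.

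At a block boundary $2^{l+1}Q$ one has $T_\phi(2^{l+1}Q)=\sum_{q\ge Q}\tfrac{(-1)^q}{2}\sum_{j<2^l}(-1)^{t_j}(2^{l+1}q+j)^{-2}$. The identity $\sum_{j<2^l}(-1)^{t_j}j^k=0$ for $k\le l-1$ makes the inner sum $c\,q^{-(l+2)}+O(q^{-(l+3)})$. The alternating factor $(-1)^q$ then keeps the sum of the main terms $O(Q^{-(l+2)})$ instead of order $Q^{-(l+1)}$. Hence $T_{f_2}(2^{l+3}N)=O(N^{-(l+2)})$, which means all your $c_j(0)$ with $j\le l+1$ vanish. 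The outer series is then dominated by $\sum_n n^{l}n^{-l-2}$, and no twisted $f_1$ or partial summation is needed.
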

We remark that the sequences $f_1$ and $f_2$ constructed in Theorem \ref{th1} take non-zero values at infinitely many integers (see Lemma \ref{lem1}).

\section{Preliminaries}
In this section, we provide some preliminary results used in the proof of Theorem \ref{th1}.
Let us define the sets $O$ and $E$ by
\[
O := \{n \in\N_{\ge0} \mid n\  \text{has an odd number of 1's in its binary expansion}\},
\]
\[
E := \{n \in\N_{\ge0} \mid n\  \text{has an even number of 1's in its binary expansion}\}.
\]
We assume that $0\in E$.
Thue-Morse sequence $t_j(j\in\N_{\ge0})$ is defined by
\begin{equation}\label{t1}
t_j:=
\begin{cases}
1 & j\in O, \\
0 & j\in E.
\end{cases}
\end{equation}
In number theory, there is a partition problem known as the Prouhet--Tarry--Escott problem.
It concerns subsets $A,B\subset\Z$ that satisfy a system of $n$ simultaneous Diophantine system:
\[
\sum_{a\in A}a^k=\sum_{b\in B}b^k\quad(k=1,2,\ldots,n).
\]
In 1851, Prouhet found the following solution.
\begin{theo}[e.g.~{\cite[Theorem 6]{P}}]\label{tp}
The Thue-Morse sequence $t_j$ has the following property. For any $l\in\N_{\ge0}$, we define the sets $A$ and $B$ by
\[
A:=\{ a\in\{0,1,2,\ldots,2^l-1 \}:t_a=1\},
\]
\[
B:=\{ b\in\{0,1,2,\ldots,2^l-1 \}:t_b=0\}.
\]
Then, for all $0\le k\le l-1$, we have
\begin{equation}\label{tp0}
\sum_{a\in A}a^k=\sum_{b\in B}b^k,
\end{equation}
with the convention that  $0^0=1$. In other words, 
\begin{equation}\label{45}
\sum_{j=0}^{2^l-1}(-1)^{t_j}j^k=0.
\end{equation}
\end{theo}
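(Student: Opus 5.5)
We prove \eqref{45} by a generating-function argument; \eqref{tp0} is then the same statement, since $(-1)^{t_j}=-1$ for $j\in A$ and $(-1)^{t_j}=1$ for $j\in B$, so \eqref{45} reads $\sum_{b\in B}b^k-\sum_{a\in A}a^k=0$. Consider the polynomial
\[
P_l(x):=\prod_{i=0}^{l-1}\bigl(1-x^{2^i}\bigr).
\]
Expanding the product, a choice of terms from the factors corresponds to a subset $S\subseteq\{0,\ldots,l-1\}$. It contributes $(-1)^{|S|}x^{j}$ with $j=\sum_{i\in S}2^i$. By uniqueness of binary expansion, every $j\in\{0,\ldots,2^l-1\}$ arises exactly once, and $|S|$ is the number of $1$'s in the binary expansion of $j$. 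Hence, by the definition \eqref{t1} of $t_j$,
\[
P_l(x)=\sum_{j=0}^{2^l-1}(-1)^{t_j}x^j .
\]

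Next, each factor satisfies $1-x^{2^i}=(1-x)(1+x+\cdots+x^{2^i-1})$, so $P_l(x)=(1-x)^lQ(x)$ for some polynomial $Q$. Let $\theta:=x\,\frac{d}{dx}$. Then
\[
\theta^kP_l(x)=\sum_{j=0}^{2^l-1}(-1)^{t_j}j^kx^j .
\]
For $k=0$ this is consistent with the convention $0^0=1$, since the $j=0$ term of $P_l$ is the constant $1$. Evaluating at $x=1$ gives exactly the left-hand side of \eqref{45}. So it suffices to show that $\theta^kP_l$ vanishes at $x=1$ for $0\le k\le l-1$.

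For this we use the lemma: if $F=(1-x)^rR$ with $r\ge1$ and $R$ a polynomial, then
\[
\theta F=x\bigl(-r(1-x)^{r-1}R+(1-x)^rR'\bigr)=(1-x)^{r-1}\widetilde R
\]
for some polynomial $\widetilde R$. Applying the lemma $k$ times shows that $\theta^kP_l$ is divisible by $(1-x)^{l-k}$. Since $l-k\ge1$ for $k\le l-1$, it vanishes at $x=1$, proving \eqref{45} and hence \eqref{tp0}.

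No step is a real obstacle. The only point needing care is the bookkeeping: the exponent of $(1-x)$ drops by exactly one with each application of $\theta$, which is why the range is precisely $k\le l-1$. For $l=0$ the range of $k$ is empty and there is nothing to prove.

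An alternative route is induction on $l$, using $t_{j+2^l}=1-t_j$ for $0\le j<2^l$. This gives
\[
\sum_{j<2^{l+1}}(-1)^{t_j}j^k=\sum_{j<2^l}(-1)^{t_j}\bigl(j^k-(j+2^l)^k\bigr).
\]
Expanding $(j+2^l)^k$ binomially reduces this to the sums for smaller exponents, which vanish by the inductive hypothesis. The generating-function proof above avoids that bookkeeping.
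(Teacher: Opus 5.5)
The paper does not prove this statement itself. It quotes Prouhet's theorem from its reference [P, Theorem 6] and uses it only as a black box to kill the terms $k\le l-1$ in the expansion (\ref{bi1}).

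Your argument is a correct, self-contained proof, and it is the classical one. Each step holds:
\begin{itemize}
\item the expansion of $\prod_{i=0}^{l-1}(1-x^{2^i})$ via uniqueness of binary digits;
\item the divisibility by $(1-x)^l$;
\item your lemma that $\theta=x\,\frac{d}{dx}$ lowers the guaranteed power of $1-x$ by one, with $r\ge1$ maintained at every step for $k\le l$;
\item the treatment of $k=0$ via $0^0=1$, and the vacuous case $l=0$.
\end{itemize}
Your inductive alternative through $t_{j+2^l}=1-t_j$ is also correct, since the binomial expansion involves only exponents $i\le k-1\le l-1$.

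Your route also gives something the bare citation does not. Since $\widetilde R(1)=-rR(1)$ and $Q(1)=\prod_{i=0}^{l-1}2^i=2^{l(l-1)/2}\neq0$, iterating your lemma $l$ times gives $\sum_{j=0}^{2^l-1}(-1)^{t_j}j^l=(-1)^l\,l!\,2^{l(l-1)/2}\neq0$. Hence the range $0\le k\le l-1$ is sharp. It also follows that the constants $X_l$ and $Y_l$ in the proof of Theorem \ref{th3} are nonzero, so the sum in (\ref{h}) has exact order $m^{-(l+2)}$. The paper does not need this, but it confirms that the Thue--Morse cancellation is used to its full extent there.
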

\begin{example}
In (\ref{tp0}), when $l=2$,
\begin{align*}
1^0+2^0&=0^0+3^0=2, \\
1^1+2^1&=0^1+3^1=3.
\end{align*}
When $l=3$,
\begin{align*}
1^0+2^0+4^0+7^0&=0^0+3^0+5^0+6^0=4, \\
1^1+2^1+4^1+7^1&=0^1+3^1+5^1+6^1=14, \\
1^2+2^2+4^2+7^2&=0^2+3^2+5^2+6^2=70.
\end{align*}
\end{example}

\section{Proof}
For any $l\in\N_{\ge0}$, we define the sequence $f_1\in\F(\RR_{2^{l+3}};\Q)$ by
\[
f_1(m):=\sum_{k=0}^{2^{l+3}-1}\xi_{2^{l+3}}^{km}=
\begin{cases}
1 & m\equiv 0\pmod{2^{l+3}}, \\
0 & \text{otherwise}.
\end{cases}
\]
Then $\sum_{y\in \RR_{2^{l+3}}}f_1(y)\neq0$, and $f_1$ has a Fourier expansion of the form (\ref{f1}). 
For any $0\le j\le2^l-1$, we define the sequences $g_{j1},g_{j2}\in\F(\RR_{2^{l+3}};\Q(\xi_{2^{l+2}}))$ and $h_j\in\F(\RR_{2^{l+2}};\Q(\xi_{2^{l+2}}))$ by
\begin{equation}\label{41}
g_{j1}(m):=\frac{1}{2^{l+1}}\sum_{k=0}^{2^{l+1}-1}\xi_{2^{l+2}}^{-j(4k+1)}\xi_{2^{l+3}}^{(4k+1)m} ,
\end{equation}
\begin{equation}\label{42}
g_{j2}(m):=\frac{1}{2^{l+1}}\sum_{k=0}^{2^{l+1}-1}\xi_{2^{l+2}}^{-j(4k+3)}\xi_{2^{l+3}}^{(4k+3)m}, 
\end{equation}
\begin{equation}\label{43}
h_j(m):=\frac{1}{2^{l+2}}\sum_{k=0}^{2^{l+1}-1}\xi_{2^{l+2}}^{-j(2k+1)}\xi_{2^{l+2}}^{(2k+1)m}. 
\end{equation}
Using $g_{j1},g_{j2},h_j$ and (\ref{t1}), define the sequence $f_2\in\F(\RR_{2^{l+3}};\Q(\xi_{2^{l+2}}))$ by
\[
f_2:=\sum_{j=0}^{2^l-1}(-1)^{t_{j}}(g_{j1}+g_{j2}-h_j).
\]
The sequence $f_2$ has the following basic properties.
\begin{lemma}\label{lem1}
The sequence $f_2$ is not identically zero. More precisely, the cardinality of set 
\[
S(f_2)=\{m\in\N_{\ge0}\mid f_2(m)\neq0 \}
\]
is infinite.
\end{lemma}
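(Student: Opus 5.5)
The plan is to compute $f_2$ explicitly at one convenient residue, namely $m=0$. We then use the periodicity of $f_2$ to obtain infinitely many nonzero values. Since $f_2\in\F(\RR_{2^{l+3}};\Q(\xi_{2^{l+2}}))$ is viewed as a function on $\Z$ via the projection $\Z\to\RR_{2^{l+3}}$, we have $f_2(m+2^{l+3})=f_2(m)$. Hence if $f_2(m_0)\neq 0$ for a single $m_0$, then the whole progression $m_0+2^{l+3}\N_{\ge0}$ lies in $S(f_2)$, and $S(f_2)$ is infinite. So everything reduces to showing $f_2\not\equiv 0$.

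First I evaluate the three building blocks in closed form using geometric sums. Write $\zeta=\xi_{2^{l+3}}$ and note $\xi_{2^{l+2}}=\zeta^2$. In (\ref{41}) the summand is $\zeta^{(4k+1)(m-2j)}$. Setting $n=m-2j$ and factoring out $\zeta^{n}$ leaves $\sum_{k=0}^{2^{l+1}-1}\xi_{2^{l+1}}^{kn}$. This sum equals $2^{l+1}$ if $2^{l+1}\mid n$ and $0$ otherwise. Therefore
\[
g_{j1}(m)=\zeta^{\,m-2j}\,\mathbf{1}[2^{l+1}\mid m-2j],\qquad g_{j2}(m)=\zeta^{\,3(m-2j)}\,\mathbf{1}[2^{l+1}\mid m-2j].
\]
In the same way, (\ref{43}) gives
\[
h_j(m)=\tfrac12\,\xi_{2^{l+2}}^{\,m-j}\,\mathbf{1}[2^{l+1}\mid m-j].
\]
On the support of the indicator, $\xi_{2^{l+2}}^{\,m-j}=\pm1$, so $h_j(m)=\pm\tfrac12$ there.

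Next I specialize to $m=0$ and use the range $0\le j\le 2^l-1$.

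The condition $2^{l+1}\mid 2j$ means $2^l\mid j$, which forces $j=0$. So $g_{j1}(0)=g_{j2}(0)=0$ for $j\neq0$, while $g_{01}(0)=g_{02}(0)=1$.

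The condition $2^{l+1}\mid j$ also forces $j=0$. So $h_j(0)=0$ for $j\ne0$, while $h_0(0)=\tfrac12$.

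Since $t_0=0$ (because $0\in E$), only the $j=0$ term survives:
\[
f_2(0)=(-1)^{t_0}\bigl(1+1-\tfrac12\bigr)=\tfrac32\neq0.
\]
Combined with periodicity, this gives $\{2^{l+3}r: r\in\N_{\ge0}\}\subset S(f_2)$, which proves the lemma.

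The only step requiring care is the geometric-sum evaluation. One must check that the exponents reduce correctly between the moduli $2^{l+1}$, $2^{l+2}$ and $2^{l+3}$, for instance $\zeta^{4kn}=\xi_{2^{l+1}}^{kn}$ and $\xi_{2^{l+2}}^{-j(4k+1)}=\zeta^{-2j(4k+1)}$. Getting the divisibility condition in the indicators right is what makes the cancellation at $m=0$ transparent. Beyond that, the argument is a direct computation.
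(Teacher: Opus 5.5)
Your proof is correct and follows the same route as the paper. Both compute closed forms for $g_{j1},g_{j2},h_j$ via geometric sums, find one explicit nonzero value of $f_2$, and then use periodicity modulo $2^{l+3}$. The only difference is the evaluation point, and yours is the better one: the paper evaluates at $m=1$, getting $f_2(1)=(-1)^{t_1}(-h_1(1))=1/2$, which needs $j=1$ in the range $0\le j\le 2^l-1$ and hence $l\ge1$. For $l=0$ one in fact has $f_2(1)=0$, whereas your value $f_2(0)=3/2$ holds for every $l\ge0$.
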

\begin{proof}
By the definition of $g_{j1}$, we have
\[
\begin{aligned}
g_{j1}(m)&=\frac{\xi_{2^{l+3}}^{m-2j}}{2^{l+1}}\sum_{k=0}^{2^{l+1}-1}\xi_{2^{l+3}}^{4k(m-2j)}=\frac{\xi_{2^{l+3}}^{m-2j}}{2^{l+1}}\sum_{k=0}^{2^{l+1}-1}\xi_{2^{l+1}}^{k(m-2j)} \\
&=
\begin{cases}
i^{(m-2j)/2^{l+1}} & m\equiv 2j\pmod{2^{l+1}}, \\
0 & \text{otherwise}.
\end{cases}
\end{aligned}
\]
By a similar calculation, we obtain
\[
\begin{aligned}
g_{j2}(m)&=\frac{\xi_{2^{l+3}}^{3(m-2j)}}{2^{l+1}}\sum_{k=0}^{2^{l+1}-1}\xi_{2^{l+3}}^{4k(m-2j)}=\frac{\xi_{2^{l+3}}^{3(m-2j)}}{2^{l+1}}\sum_{k=0}^{2^{l+1}-1}\xi_{2^{l+1}}^{k(m-2j)} \\
&=
\begin{cases}
(-i)^{(m-2j)/2^{l+1}} & m\equiv 2j\pmod{2^{l+1}}, \\
0 & \text{otherwise},
\end{cases}
\end{aligned}
\]
\[
\begin{aligned}
h_j(m)&=\frac{\xi_{2^{l+2}}^{m-j}}{2^{l+2}}\sum_{k=0}^{2^{l+1}-1}\xi_{2^{l+2}}^{2k(m-j)}=\frac{\xi_{2^{l+2}}^{m-j}}{2^{l+2}}\sum_{k=0}^{2^{l+1}-1}\xi_{2^{l+1}}^{k(m-j)} \\
&=
\begin{cases}
(-1)^{(m-j)/2^{l+1}}/2 & m\equiv j\pmod{2^{l+1}}, \\
0 & \text{otherwise}.
\end{cases}
\end{aligned}
\]
Thus, we have 
\[
f_2(1)=(-1)^{t_1}\left(-h_1(1)\right)=1/2.
\]
Since $f_2$ is a periodic sequence with period $2^{l+3}$, it follows $f_2(1)=f_2(1+2^{l+3}T)=1/2$ for any $T\in\N$.
\end{proof}
\begin{theorem}\label{th3}
For the sequences $f_1,f_2\in\F(\RR_{2^{l+3}};\Q(\xi_{2^{l+2}}))$ defined above, the infinite series $L_\ast(-l,2;f_1,f_2)$ converges.
\end{theorem}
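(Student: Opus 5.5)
The plan is to reduce the double series to a single series over $m_1$ and to show that the inner sums over $m_2$ telescope to short, highly cancelling blocks. Put $N=2^{l+3}$. Since $f_1$ is the indicator of $N\Z$, the only nonzero terms have $m_1=Nn$. Because $f_1(m_1)=0$ for $m_1\notin N\Z$, the truncated sums $\sum_{R\ge m_1>m_2>0}$ are constant for $R$ between consecutive multiples of $N$. Hence
\[
L_\ast(-l,2;f_1,f_2)=\lim_{K\to\infty}\sum_{n=1}^{K}(Nn)^{l}\,S(Nn-1),\qquad S(M):=\sum_{0<m\le M}\frac{f_2(m)}{m^2},
\]
and it suffices to prove $S(Nn-1)=O(n^{-l-2})$, which makes the series absolutely convergent.

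\emph{Step 1: a structural identity for $f_2$.} I will use the explicit formulas for $g_{j1},g_{j2},h_j$ from the proof of Lemma \ref{lem1}. For $m\equiv 2j \pmod{2^{l+1}}$ we have $g_{j1}(m)+g_{j2}(m)=i^q+(-i)^q$ with $q=(m-2j)/2^{l+1}$. This equals $2(-1)^p$ when $q=2p$ and $0$ when $q$ is odd, so it is supported on $m=2(j+2^{l+1}p)$.

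Define the $2^{l+2}$-periodic function $u$ on the nonnegative integers by
\[
u(j+2^{l+1}p):=\tfrac12(-1)^{t_j}(-1)^p \quad (0\le j\le 2^l-1),
\]
and $u:=0$ on the residues in $[2^l,2^{l+1})$ modulo $2^{l+1}$. Comparing with the formula for $h_j$, I expect
\[
f_2(m)=4u(m/2)\,[2\mid m]\;-\;u(m).
\]
Consequently
\[
S(M)=\sum_{m\le M/2}\frac{u(m)}{m^2}-\sum_{m\le M}\frac{u(m)}{m^2}=-\sum_{M/2<m\le M}\frac{u(m)}{m^2}.
\]
In particular $L(2;f_2)=0$, which is exactly what removes the divergent part $\sum_n (Nn)^l\cdot\mathrm{const}$. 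I will verify this identity carefully, including the boundary cases $j=0$ and $m$ odd.

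\emph{Step 2: estimating the block sum.} For $M=Nn-1$ the range $M/2<m\le M$ is $Nn/2\le m\le Nn-1$, a union of exactly $n$ complete periods of length $2^{l+2}$. Equivalently, it is $2n$ consecutive blocks $m=B+j$ with $0\le j<2^{l+1}$, $B=2^{l+1}q$, and these carry alternating signs $(-1)^q$.

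Within one block, expand $(B+j)^{-2}$ by Taylor's theorem in $j/B$ up to order $l-1$, with remainder $O(B^{-2-l})$. The polynomial part is killed by Prouhet's identity (\ref{45}), $\sum_{j<2^l}(-1)^{t_j}j^k=0$ for $0\le k\le l-1$. So one block contributes $\sum_j(-1)^{t_j}j^{l}\,c_l B^{-2-l}+O(B^{-3-l})$, where the leading coefficient depends on $B$ only through $B^{-2-l}$.

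Pairing adjacent blocks $q,q+1$ with opposite signs, the leading terms differ by $O(B^{-3-l})$ (a derivative in $B$), and so does the error. Summing $n$ pairs with $B\asymp Nn$ gives $S(Nn-1)=O(n\cdot n^{-3-l})=O(n^{-2-l})$. Therefore $(Nn)^lS(Nn-1)=O(n^{-2})$, which is summable.

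\emph{Main obstacle.} The key point is Step 1: recognising that $g_{j1}+g_{j2}$ and $h_j$ are designed so that $f_2$ is a dilation difference $4u(m/2)-u(m)$, which turns the inner partial sums into the short window $(M/2,M]$. I would check it directly from the closed forms in Lemma \ref{lem1}. The second delicate point is that the Prouhet cancellation alone only gives $O(n^{-1-l})$, which is not enough against $(Nn)^l$. The extra alternating factor $(-1)^p$ must be exploited by pairing blocks, and the choice $M=Nn-1$ is what guarantees the window consists of complete periods with no boundary error.
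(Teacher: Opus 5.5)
Your argument is correct, but it is organized differently from the paper's.

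\textbf{The paper's route.} It proves $L_\ast(2;f_2)=0$ by checking (\ref{1}) for each $j$. It then rewrites the partial sum as minus the infinite tail $\sum_{m\ge 2^{l+3}n}f_2(m)/m^2$ and splits this tail into $H$, $G_1$, $G_2$, which carry the characters $(-1)^m$, $i^m$, $(-i)^m$. Each piece is bounded by Prouhet cancellation inside blocks, followed by an alternating-sum estimate.

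\textbf{Your route.} You use, pointwise, the dilation relation that underlies the second equality in (\ref{1}). With $u=\sum_{j}(-1)^{t_j}h_j$ one does have $f_2(m)=4u(m/2)[2\mid m]-u(m)$. This holds because $g_{j1}+g_{j2}$ vanishes unless $m\equiv 2j \pmod{2^{l+2}}$, where it equals $2(-1)^p$. Moreover, the residues $2j$ for $0\le j<2^l$ are distinct and avoid $[2^{l+1},2^{l+2})$. Hence the partial sum collapses exactly to the finite window $-\sum_{M/2<m\le M}u(m)/m^2$. This buys three things:
\begin{itemize}
\item $L_\ast(2;f_2)=0$ comes for free;
\item you estimate one real-valued function instead of three pieces;
\item for $M=2^{l+3}n-1$ the window consists of exactly $n$ full periods, so there are no boundary or tail terms.
\end{itemize}

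\textbf{Common core.} The analytic input is the same in both proofs. Prouhet's identity kills the first $l$ Taylor terms in each block. The sign change between consecutive blocks supplies the extra factor $n^{-1}$ needed to beat $(2^{l+3}n)^l$.

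\textbf{Trade-off.} The paper's route is more modular: it needs only the vanishing of $L_\ast(2;f_2)$ plus a tail bound, so it would transfer to constructions without an exact dilation structure. Yours makes transparent why $f_2$ is built from $g_{j1}+g_{j2}$ and $h_j$ in the first place.
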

\begin{proof}
For any $0\le j\le2^l-1$, we consider the following $2^l$ equations:
\begin{equation}\label{1}
\begin{aligned}
\sum_{m=1}^{\infty}\frac{i^m}{\left(2^{l+1}m+2j\right)^2}+\sum_{m=1}^{\infty}\frac{(-i)^m}{\left(2^{l+1}m+2j\right)^2} 
=\sum_{m=1}^{\infty}\frac{2(-1)^m}{\left(2^{l+2}m+2j\right)^2} 
=\sum_{m=1}^{\infty}\frac{(-1)^m/2}{\left(2^{l+1}m+j\right)^2}.
\end{aligned}
\end{equation}
For any $0\le j\le2^l-1$, by the definitions of $g_{j1}$, $g_{j2}$ and $h_j$, the equation (\ref{1}) can be rewritten as 
\[
\sum_{m=1}^{\infty}\frac{g_{j1}(m)}{m^2}+\sum_{m=1}^{\infty}\frac{g_{j2}(m)}{m^2}=\sum_{m=1}^{\infty}\frac{h_j(m)}{m^2}.
\]
Clearly, the equation above implies
\begin{equation}\label{44}
L_\ast(2;g_{j1}+g_{j2}-h_j)=\sum_{m=1}^{\infty}\frac{g_{j1}(m)+g_{j2}(m)-h_j(m)}{m^2}=0.
\end{equation}
From (\ref{44}), we have
\[
L_\ast(2;f_2)=\sum_{j=0}^{2^{l}-1}(-1)^{t_j}L_\ast(2;g_{j1}+g_{j2}-h_j)=0.
\]
Hence, for any $N\ge2$, we obtain
\begin{equation}\label{2}
\sum_{m=1}^{2^{l+3}N-1}\frac{f_2(m)}{m^2}=-\sum_{m=2^{l+3}N}^{\infty}\frac{f_2(m)}{m^2}.
\end{equation}
Now, we consider  the estimation of the right-hand side of (\ref{2}). Here we decompose the right-hand side of (\ref{2}) as follows:
\begin{equation}\label{46}
\sum_{m=2^{l+3}N}^{\infty}\frac{f_2(m)}{m^2}=H(N)+G_1(N)+G_2(N),
\end{equation}
where
\[
H(N):=-\sum_{m=2^{l+3}N}^{\infty}\left(\sum_{j=0}^{2^l-1}(-1)^{t_j}h_j(m)\right)\frac{1}{m^2}, 
\]
\[
G_1(N):=\sum_{m=2^{l+3}N}^{\infty}\left( \sum_{j=0}^{2^l-1}(-1)^{t_{j}}g_{j1}(m)\right)\frac{1}{m^2}, 
\]
\[
G_2(N):=\sum_{m=2^{l+3}N}^{\infty}\left( \sum_{j=0}^{2^l-1}(-1)^{t_{j}}g_{j2}(m)\right)\frac{1}{m^2}. 
\]
We first estimate $H(N)$. From the right-hand side of (\ref{1}), we have 
\[
H(N)=-\sum_{m=4N}^{\infty}\left(\frac{(-1)^m}{2}\sum_{j=0}^{2^l-1}\frac{(-1)^{t_{j}}}{\left(2^{l+1}m+j\right)^2}\right).
\]
The summand above is rewritten as follows:
\begin{equation}\label{h}
\sum_{j=0}^{2^l-1}\frac{(-1)^{t_{j}}}{\left(2^{l+1}m+j\right)^2}=\frac{1}{(2^{l+1}m)^2}\sum_{j=0}^{2^l-1}(-1)^{t_{j}}\left(1+\frac{j}{2^{l+1}m}\right)^{-2}.
\end{equation} 
From the binomial theorem, we have
\begin{equation}\label{bi1}
\begin{aligned}
(-1)^{t_{j}}\left(1+\frac{j}{2^{l+1}m}\right)^{-2}&=(-1)^{t_{j}}\left\{\sum_{k=0}^{l-1}\binom{-2}{k}\left(\frac{j}{2^{l+1}m}\right)^k\right\} \\
&\quad+(-1)^{t_j}\binom{-2}{l}\left(\frac{j}{2^{l+1}m}\right)^{l}+O(m^{-(l+1)}).
\end{aligned}
\end{equation}
From Theorem \ref{tp}, we obtain
\[
\begin{aligned}
\sum_{j=0}^{2^l-1}(-1)^{t_{j}}\left\{\sum_{k=0}^{l-1}\binom{-2}{k}\left(\frac{j}{2^{l+1}m}\right)^k\right\} 
=\sum_{k=0}^{l-1}\binom{-2}{k}\frac{1}{2^{k(l+1)}m^k}\left(\sum_{j=0}^{2^l-1}(-1)^{t_{j}}j^k\right)=0.
\end{aligned}
\]
Thus, the equation (\ref{h}) can be expressed as
\[
\begin{aligned}
\sum_{j=0}^{2^l-1}\frac{(-1)^{t_{j}}}{\left(2^{l+1}m+j\right)^2}&=\frac{1}{(2^{l+1}m)^2}\left(\sum_{j=0}^{2^l-1}(-1)^{t_j}\binom{-2}{l}\left(\frac{j}{2^{l+1}m}\right)^{l} +O(m^{-(l+1)})\right) \\
&=\frac{X_l}{m^{l+2}}+O(m^{-(l+3)}),
\end{aligned}
\]
where
\[ 
X_l:=\sum_{j=0}^{2^l-1}(-1)^{t_j}\binom{-2}{l}\frac{j^l}{2^{(l+1)(l+2)}}.
\]
Therefore, $H(N)$ can be rewritten as  
\begin{equation}\label{w}
\begin{aligned}
H(N)&=\sum_{m=4N}^{\infty}\left(\frac{-X_l}{2}\frac{(-1)^m}{m^{l+2}}+O(m^{-(l+3)})\right) \\
&=\frac{-X_l}{2}\left(\sum_{m=4N}^{\infty}\frac{(-1)^m}{m^{l+2}}\right)+O(N^{-(l+2)}).
\end{aligned} 
\end{equation}
Moreover, the sum on the right-hand side of (\ref{w}) becomes
\begin{equation}\label{x}
\begin{aligned}
\sum_{m=4N}^{\infty}\frac{(-1)^m}{m^{l+2}}&=\sum_{m=N}^{\infty}\left(\frac{1}{(2m)^{l+2}}-\frac{1}{(2m+1)^{l+2}}\right)\\
&=\sum_{m=2N}^{\infty}\frac{1}{(2m)^{l+2}}\left(1-\left(1+\frac{1}{2m}\right)^{-(l+2)} \right).
\end{aligned}
\end{equation}
Furthermore, by the binomial theorem, one has $\left(1+\frac{1}{2m}\right)^{-(l+2)}=1+O(m^{-1})$. Thus, 
by (\ref{w}) and (\ref{x}), we obtain the estimation
\begin{equation}\label{y}
H(N)=\sum_{m=2N}^{\infty}O(m^{-(l+3)})\ll \int_{2N}^{\infty}\frac{1}{x^{l+3}}dx\ll N^{-(l+2)}.
\end{equation}
Next, we estimate $G_1(N)$. By the same method as for $H(N)$ and
\[
\begin{aligned}
(-1)^{t_{j}}\left(1+\frac{j}{2^{l}m}\right)^{-2}&=(-1)^{t_{j}}\left\{\sum_{k=0}^{l-1}\binom{-2}{k}\left(\frac{j}{2^{l}m}\right)^k\right\} \\
&\quad+(-1)^{t_j}\binom{-2}{l}\left(\frac{j}{2^{l}m}\right)^{l}+O(m^{-(l+1)})
\end{aligned}
\]
instead of (\ref{bi1}), the sum $G_1(N)$ can be expressed as  
\[
G_1(N)=\sum_{m=4N}^{\infty}\left(Y_l\frac{i^m}{m^{l+2}}+O(m^{-(l+3)})\right)=Y_l\left( \sum_{m=4N}^{\infty}\frac{i^m}{m^{l+2}}\right)+O(N^{-(l+2)}),
 \]
 where
\[ 
Y_l:=\sum_{j=0}^{2^l-1}(-1)^{t_j}\binom{-2}{l}\frac{j^l}{2^{l^2+2l+2}}.
\]
In addition, by an argument similar to that in (\ref{x}), we have
\[
\begin{aligned}
\left\lvert\sum_{m=4N}^{\infty}\frac{i^m}{m^{l+2}}\right\rvert&=\left\lvert\sum_{m=2N}^{\infty}\left(\frac{i^{2m}}{(2m)^{l+2}}+\frac{i^{2m+1}}{(2m+1)^{l+2}}\right)\right\rvert\\
&=\left\lvert\sum_{m=2N}^{\infty}\frac{(-1)^m}{(2m)^{l+2}}+i\sum_{m=2N}^{\infty}\frac{(-1)^m}{(2m+1)^{l+2}}\right\rvert \\
&\le \left\lvert\sum_{m=2N}^{\infty}\frac{(-1)^m}{(2m)^{l+2}}\right|+\left|\sum_{m=2N}^{\infty}\frac{(-1)^m}{(2m+1)^{l+2}}\right\rvert \ll N^{-(l+2)}.
\end{aligned}
\]
Hence, we obtain the estimate
\[
G_1(N)\ll N^{-(l+2)}.
\]
Since the estimation for $G_2(N)$ is obtained simply by replacing $i$ with $-i$ in the estimation for $G_1(N)$, one has
\[
G_2(N)\ll N^{-(l+2)}. 
\]
Hence, we have 
\begin{equation}\label{48}
\sum_{m=2^{l+3}N}^{\infty}\frac{f_2(m)}{m^2}=H(N)+G_1(N)+G_2(N)\ll N^{-(l+2)}.
\end{equation}
From the definition of  $L_\ast(-l,2;f_1,f_2)$, it holds that
\[
L_\ast(-l,2;f_1,f_2)=\sum_{n>m>0}\frac{f_1(n)f_2(m)}{n^{-l}m^2}=\sum_{n=1}^{\infty}\frac{1}{(2^{l+3}n)^{-l}}\sum_{m=1}^{2^{l+3}n-1}\frac{f_2(m)}{m^2}.
\]
By using (\ref{2}), we have
\[
L_\ast(-l,2;f_1,f_2)=\sum_{n=1}^{\infty}\frac{1}{(2^{l+3}n)^{-l}}\left(-\sum_{m=2^{l+3}n}^{\infty}\frac{f_2(m)}{m^2}\right).
\]
Thus, from the estimation (\ref{48}), we obtain
\[
\lvert L_\ast(-l,2;f_1,f_2)\rvert\ll\sum_{n=1}^{\infty}\frac{1}{n^{-l}}n^{-l-2}= \sum_{n=1}^{\infty}\frac{1}{n^2}<\infty .
\]
Therefore, we construct double $L$-values that converge even in the case of negative weight.
\end{proof}
\section*{Acknowledgments}
The second author was partially supported by JSPS grant 22K03276. We would like to express our sincere gratitude to Mr.~Hayashida for his valuable advice and helpful comments.

 
\end{document}